\newtheorem{theorem}{Theorem}
\newtheorem{claim}[theorem]{Claim}
\newtheorem{corollary}[theorem]{Corollary}
\title{Tur\'an-type results for intersection graphs of boxes}
\author{Istv\'an Tomon}
\address{ETH Zurich \and MIPT Moscow}
\email{istvan.tomon@math.ethz.ch}
\author{Dmitriy Zakharov}
\address{MIPT Moscow \and HSE Moscow}
\email{s18b1\_zakharov@179.ru, zakharov2k@gmail.com}
\date{}
\begin{document}

\maketitle

\begin{abstract}
In this short note, we prove the following analog of the K\H{o}v\'ari-S\'os-Tur\'an theorem for intersection graphs of boxes. If $G$ is the intersection graph of $n$ axis-parallel boxes in $\mathbb{R}^{d}$ such that $G$ contains no copy of $K_{t,t}$, then $G$ has at most $ctn(\log n)^{2d+3}$ edges, where $c=c(d)>0$ only depends on $d$. Our proof is based on exploring connections between boxicity, separation dimension and poset dimension.

Using this approach, we also show that a construction of Basit et al. of $K_{2,2}$-free incidence graphs of points and rectangles in the plane can be used to disprove a conjecture of Alon et al. We show that there exist graphs of separation dimension 4 having superlinear number of edges.
\end{abstract}

\section{Introduction}

The celebrated K\H{o}v\'ari-S\'os-Tur\'an theorem \cite{KST54} states that if $G$ is a graph on $n$ vertices containing no copy of $K_{t,t}$, then $G$ has at most $O(n^{2-1/t})$ edges. In the past few decades, a great amount of research was dedicated to showing that this bound can be significantly improved in certain restricted families of graphs, many of which are geometric in nature. See e.g. \cite{FPSSZ17} for semi-algebraic graphs, \cite{JP20} for graphs of bounded VC-dimension, and \cite{FP08} for intersection graphs of connected sets in the plane. In particular, Fox and Pach \cite{FP08} proved that if $G$ is the intersection graph of $n$ arcwise connected sets in the plane, and $G$ contains no $K_{t,t}$, then $G$ has at most $cn$ edges, where $c=c(t)>0$ depends only on $t$. In this paper, we are interested in the question that in what meaningful ways can this result be extended in higher dimensions. That is, for which families of geometric objects is it true that if their intersection graph $G$ is $K_{t,t}$-free, then $G$ has at most linear, or almost linear number of edges? 

It turns out that already in dimension 3, one must put heavy restrictions on the family for this to hold. As a counterexample to many natural candidates, there exists a family of $n$ lines in $\mathbb{R}^{3}$, whose intersection graph is $K_{2,2}$-free and contains $\Omega(n^{4/3})$ edges. To see this, consider a configuration of $n/2$ points and $n/2$ lines on the plane with $\Omega(n^{4/3})$ incidences, which is the most possible number of incidences by the well known Szemer\'edi-Trotter theorem \cite{SzT83}. To get an intersection graph in $\mathbb{R}^{3}$, replace each point with a line parallel to the $z$-axis containing the point, and replace each line $l$ with a line $l'$ such that the projection of $l'$ to the $xy$-plane is $l$, and the lines $l'$ are pairwise disjoint. This family of $n$ lines in $\mathbb{R}^{3}$ contains no $K_{2,2}$, and has $\Omega(n^{4/3})$ intersections.

One natural family of geometric objects for which the above question becomes interesting is the family of axis-parallel boxes. In this case, we are able to prove an almost linear upper bound on the number of edges.

\begin{theorem}\label{thm:main}
Let $d,t$ be positive integers, then there exists $c=c(d)>0$ such that the following holds. If $G$ is the intersection graph of $n$ $d$-dimensional axis-parallel boxes such that $G$ contains no $K_{t,t}$, then $G$ has at most $ctn(\log n)^{2d+3}$ edges.
\end{theorem}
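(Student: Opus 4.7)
My plan is to prove the theorem by induction on the dimension $d$, combined with a divide-and-conquer on the boxes at each step. Let $f_d(n)$ denote the maximum number of edges in a $K_{t,t}$-free intersection graph of $n$ axis-parallel $d$-dimensional boxes, and write $a_j(B), b_j(B)$ for the left and right endpoints of a box $B$ in coordinate $j$. The goal is a bound of the form $f_d(n) = O_{d,t}(n(\log n)^{O(d)})$. For the base case $d=1$, the graph $G$ is an interval graph; sorting the intervals by right endpoint yields a perfect elimination ordering in which each forward neighborhood is a clique, and since $G$ is $K_{t,t}$-free, every clique has at most $2t-1$ vertices, so $f_1(n)\leq (2t-1)n$.

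For the inductive step $d\geq 2$, I would choose $x_0$ to be the median of all $2n$ coordinate-$1$ endpoints of the boxes and partition them into $L=\{B:b_1(B)\leq x_0\}$, $R=\{B:a_1(B)>x_0\}$, and $M=\{B:a_1(B)\leq x_0<b_1(B)\}$; the median choice gives $|L|,|R|\leq n/2$. Edges inside $L$ and inside $R$ are bounded by $f_d(n/2)$ via induction on $n$, and edges between $L$ and $R$ are absent since the two sets are coordinate-$1$ separated. For edges inside $M$, every box contains the hyperplane $x_1=x_0$, so two $M$-boxes intersect in $\mathbb{R}^d$ if and only if their projections to coordinates $2,\ldots,d$ intersect; the resulting $(d-1)$-dimensional box intersection graph coincides with $G[M]$, hence is $K_{t,t}$-free and contributes at most $f_{d-1}(n)$ edges.

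The main step is bounding the cross edges between $L$ and $M$ (and symmetrically between $R$ and $M$). Here the coordinate-$1$ intersection condition collapses to the one-dimensional threshold $b_1(B)\geq a_1(B')$, which I would handle by an inner divide-and-conquer: let $y_0$ be the median of $\{b_1(B):B\in L\}\cup\{a_1(B'):B'\in M\}$, split $L=L_1\cup L_2$ according to whether $b_1(B)\leq y_0$, and split $M=M_1\cup M_2$ according to whether $a_1(B')\leq y_0$. No edges connect $L_1$ with $M_2$; between $L_2$ and $M_1$ the coordinate-$1$ condition is automatically satisfied, so these edges form a $K_{t,t}$-free $(d-1)$-dimensional bipartite box intersection graph of size at most $f_{d-1}(n)$; and the remaining bipartite problems $(L_1,M_1)$ and $(L_2,M_2)$ are instances of the same form on at most $n/2$ vertices each, with thresholds $y_0$ and $x_0$ respectively. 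Writing $h_d(n)$ for the maximum bipartite cross-edge count in this setup, this yields $h_d(n)\leq 2h_d(n/2)+f_{d-1}(n)=O(f_{d-1}(n)\log n)$, which when plugged into $f_d(n)\leq 2f_d(n/2)+f_{d-1}(n)+2h_d(n)$ gives $f_d(n)=O(f_{d-1}(n)(\log n)^2)$; iterating from the base case yields the desired bound.

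The hard part will be verifying that each sub-problem remains $K_{t,t}$-free after projecting to a lower dimension or restricting to a bipartite setting; this is what forces the careful choices of the medians $x_0,y_0$, which are made precisely so that the coordinate-$1$ intersection condition becomes either vacuously true or vacuously false on each sub-pair, making the lower-dimensional intersection graph coincide with the corresponding restriction of the original graph. The abstract suggests that the paper's actual proof proceeds instead through connections between boxicity, separation dimension, and poset dimension; such an approach would likely show that $d$-dimensional box intersection graphs have a structural parameter (a form of separation or poset dimension) bounded by $O(d)$, and then establish a Zarankiewicz-type bound of the form $O_t(n(\log n)^{O(k)})$ for $K_{t,t}$-free graphs with that parameter equal to $k$, which is a combinatorial distillation of the divide-and-conquer sketched above.
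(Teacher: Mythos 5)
Your argument is correct in outline but takes a genuinely different route from the paper. The paper first replaces the box intersection graph by its associated bipartite poset $P(G)$, using the Adiga--Bhowmick--Chandran bound $\dim(P(G))\le 2\,\mathrm{box}(G)+4\le 2d+4$ to realize half of the edges of $G$ as comparable pairs of \emph{points} in $\mathbb{R}^{2d+4}$; it then observes (Claim \ref{boxes}) that $K_{t,t}$-freeness forbids any $t$-matching whose spanned boxes $b(x^i,y^i)$ share a common point, and runs a single divide-and-conquer on this point configuration (Theorem \ref{numedges}), paying one $\log$ factor per dimension for a total of $(\log n)^{2d+3}$. You instead run the divide-and-conquer directly on the boxes, paying roughly two $\log$ factors per dimension (one for the median split, one for the inner threshold recursion on the coordinate-$1$ overlap condition), which would give $(\log n)^{2d-2}$ --- in fact a slightly better exponent. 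The decisive advantage of the paper's formulation is that the hypothesis of Theorem \ref{numedges} (no $t$-matching with concurrently intersecting spanned boxes) is imposed on an \emph{arbitrary} graph on the point set, so it is automatically inherited by every subgraph and by the projected cross-edge graph; this is exactly what makes the recursion close without further bookkeeping.

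That inheritance is the one place where your write-up, as stated, does not quite close. When you bound the $L_2$--$M_1$ edges by $f_{d-1}(n)$, the object at hand is the \emph{bipartite} intersection graph of the projections, which is $K_{t,t}$-free only in the bipartite sense (no complete $t\times t$ pattern with one class in $L_2$ and one in $M_1$); the full $(d-1)$-dimensional intersection graph of those projections may contain a $K_{t,t}$ using pairs inside $L_2$ or inside $M_1$ whose projections meet but which are not edges of $G$. So $f_{d-1}$, as you defined it, does not apply there, and the same issue recurs inside the recursion for $h_d$. You flag this yourself, and the fix is standard: strengthen the induction hypothesis to a bipartite quantity $g_d(n)$ (the maximum number of cross edges between two families of boxes in $\mathbb{R}^d$ whose cross-intersection graph contains no $K_{t,t}$ with one class on each side) and run the entire recursion for $g_d$; the $G[M]$ term is already fine, since for boxes crossing the hyperplane the projected intersection graph coincides with $G[M]$. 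With that adjustment your argument is complete and yields the claimed bound.
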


One might conjecture that the almost linear upper bound in Theorem \ref{thm:main} can be replaced with a linear one. This is true in case $d=2$ by the above mentioned result of Fox an Pach \cite{FP08}. However, much to our surprise,  a construction of Basit et al. \cite{BCSTT20} implies that this is not true for $d\geq 3$.

\begin{theorem}[Basit et al. \cite{BCSTT20}]\label{thm:tao}
For every $n$, there exists a $K_{2,2}$-free incidence graph of $n$ points and $n$ rectangles in the plane with $\Omega(n\frac{\log n}{\log\log n})$ edges.
\end{theorem}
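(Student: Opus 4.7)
The plan is to give an explicit construction based on an auxiliary $d$-dimensional incidence structure with $d = \lceil \log n / \log \log n \rceil$, roughly following Basit et al.~\cite{BCSTT20}. Set $k = \Theta(\log n)$ so that $k^d = n$, and consider the combinatorial grid $V = [k]^d$ together with its family $\mathcal{L}$ of axis-parallel combinatorial lines (sets obtained by fixing $d-1$ coordinates and letting the remaining one range over $[k]$). Each point of $V$ lies on exactly $d$ lines, each line has $k$ points, and any two distinct lines meet in at most one point, so this abstract structure is $K_{2,2}$-free, has $nd = \Omega(n\log n/\log\log n)$ incidences, and uses $|\mathcal{L}| \leq dn/k \leq n$ lines.

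The task is then to realise $(V,\mathcal{L})$ as a point--rectangle incidence configuration in the plane: choose an injection $\varphi : V \to \mathbb{R}^2$ and, for each $\ell \in \mathcal{L}$, an axis-parallel rectangle $R_\ell$ containing $\varphi(\ell)$, such that $|R_\ell \cap R_{\ell'} \cap \varphi(V)| \leq 1$ for all distinct $\ell,\ell' \in \mathcal{L}$. Once this is accomplished, the $K_{2,2}$-free property in the plane is automatic, and padding $\mathcal{L}$ with at most $n$ dummy rectangles yields a configuration of $n$ points and $n$ rectangles with the claimed number of incidences.

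I expect the planar realisation to be the main obstacle. A naive base-$k$ embedding fails: lines in high-order coordinate directions become arithmetic progressions of large common difference, and their bounding rectangles pick up many extraneous points, which in turn form forbidden $K_{2,2}$'s with rectangles of the orthogonal family. To overcome this, I would follow the approach of Basit et al.\ and use a hierarchical embedding that distributes the $d$ coordinate directions across $\Theta(d)$ alternating horizontal and vertical scales, so that every line can be enclosed in a rectangle chosen narrowly at its natural scale, and two rectangles of different scales or orientations can be shown to share at most one image point by a direct digit-analysis argument. Reconciling the $d$-fold symmetry of the combinatorial lines with only two available geometric axes is precisely what produces the $\log\log n$ loss and is where I expect the verification of the $K_{2,2}$-free condition to be most delicate.
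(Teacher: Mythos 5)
The paper does not actually prove Theorem~\ref{thm:tao}: it is imported verbatim from Basit et al.~\cite{BCSTT20} and used as a black box, so there is no internal proof to compare yours against. Judged on its own terms, your proposal has a genuine gap. The combinatorial skeleton is fine and correctly accounted for: with $k=\Theta(\log n)$, $d=\lceil\log n/\log\log n\rceil$, $k^d=n$, the grid $[k]^d$ with its axis-parallel combinatorial lines has $nd=\Omega(n\log n/\log\log n)$ incidences, at most $dk^{d-1}\le n$ lines, and any two lines share at most one point. But all of the content of the theorem lies in the planar realisation, and that is exactly the step you defer to ``following the approach of Basit et al.'' without carrying it out. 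A proof that says ``the hard part can be done as in the paper being proved'' is not a proof.

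The gap is not merely one of missing detail; it is not even clear that the structure $([k]^d,\mathcal{L})$ as you define it \emph{is} realisable by points and axis-parallel rectangles. As you note, under any interleaved base-$k$ embedding a line in a high-order direction spans a bounding rectangle that swallows $k^{\Omega(j)}$ extraneous grid points, and two such fat rectangles in different high-order directions of the same orientation can share many points, destroying $K_{2,2}$-freeness. Repairing this is not a routine ``digit analysis'': in the language of this very paper (Claim~\ref{dim4}), realisability by points and rectangles forces the line-below-point containment order to have Dushnik--Miller dimension at most $4$, and you give no argument that your abstract incidence poset satisfies this. The actual construction in \cite{BCSTT20} is built recursively from dyadic boxes rather than as a direct product of $d$ symmetric coordinate directions, and the $\log\log n$ loss emerges from that recursion, not from a post hoc embedding of $[k]^d$. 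To close the gap you would need either an explicit embedding $\varphi$ and rectangles $R_\ell$ together with a verification that $|R_\ell\cap R_{\ell'}\cap\varphi(V)|\le 1$ for all $\ell\neq\ell'$, or a different (e.g.\ recursive dyadic) construction where that verification is actually performed.
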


 One can easily turn this construction into a bipartite intersection graph of boxes in $\mathbb{R}^{3}$. Replace each point $p$ with a box $B\times [0,1]$, where $B$ is a very small square containing $p$. Also, replace each rectangle $R$ with a box $R\times [x,x+\frac{1}{2n}]$, where $0\leq x<1$, and the intervals $[x,x+\frac{1}{2n}]$ are pairwise disjoint. The intersection graph of these $2n$ boxes is the same as the incidence graph of points and rectangles. Therefore, we get the following immediate corollary.
 
\begin{corollary}\label{cor:tao}
For every $n$, there exists a bipartite $K_{2,2}$-free intersection graph of $n$ boxes in $\mathbb{R}^{3}$ with $\Omega(n\frac{\log n}{\log\log n})$ edges.
\end{corollary}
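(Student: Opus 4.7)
The plan is to convert the $K_{2,2}$-free point--rectangle incidence configuration of Theorem \ref{thm:tao} into a bipartite intersection graph of axis-parallel boxes in $\mathbb{R}^3$ while preserving both the edge count and the forbidden subgraph. First, I apply Theorem \ref{thm:tao} to obtain a set $P$ of $n/2$ points and a set $\mathcal{R}$ of $n/2$ axis-parallel rectangles in $\mathbb{R}^2$ whose incidence graph is $K_{2,2}$-free and has $\Omega(n \log n/\log\log n)$ edges. After a small generic perturbation one may assume no point of $P$ lies on the boundary of any rectangle of $\mathcal{R}$, and distinct points of $P$ have distinct coordinates.

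Next, for each $p \in P$ I choose an axis-parallel square $B_p$ centered at $p$ whose side length $\varepsilon_p > 0$ is so small that (i) the squares $\{B_p\}_{p \in P}$ are pairwise disjoint, and (ii) for every $R \in \mathcal{R}$ one has $B_p \cap R \neq \emptyset$ iff $p \in R$; both conditions are achievable because $P$ and $\mathcal{R}$ are finite and no point lies on a rectangle boundary. Define the point-box $\widetilde{p} := B_p \times [0,1]$. Enumerate the rectangles as $R_1,\dots,R_{n/2}$, pick pairwise disjoint subintervals $I_i = [x_i, x_i + \tfrac{1}{2n}] \subset [0,1]$, and define the rectangle-box $\widetilde{R}_i := R_i \times I_i$.

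Finally, I verify the three required properties of the resulting intersection graph $H$ on these $n$ boxes. Two point-boxes $\widetilde{p}$ and $\widetilde{q}$ are disjoint because their projections $B_p, B_q$ on the $xy$-plane are; two rectangle-boxes $\widetilde{R}_i, \widetilde{R}_j$ are disjoint because their $z$-intervals $I_i, I_j$ are; and $\widetilde{p} \cap \widetilde{R}_i \neq \emptyset$ iff $B_p \cap R_i \neq \emptyset$ iff $p \in R_i$. Hence $H$ is bipartite with the same edge set as the original point--rectangle incidence graph, so it remains $K_{2,2}$-free and still has $\Omega(n \log n/\log\log n)$ edges, as desired.

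There is no real obstacle in the argument; the only detail requiring attention is choosing each $\varepsilon_p$ small enough to avoid introducing spurious rectangle incidences or point--point intersections, which is routine once general position has been arranged. The content of the corollary is therefore entirely carried by Theorem \ref{thm:tao}, with the $\mathbb{R}^3$ construction serving to separate the roles of points and rectangles into disjoint vertical strata.
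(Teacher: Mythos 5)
Your proposal is correct and is essentially identical to the paper's own construction: points become thin tall boxes $B\times[0,1]$, rectangles become flat boxes $R\times I$ with pairwise disjoint $z$-intervals, and the intersection graph reproduces the incidence graph of Theorem \ref{thm:tao}. The extra care you take with general position and the choice of $\varepsilon_p$ is a reasonable elaboration of the paper's ``very small square containing $p$.''
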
 

Our Theorem \ref{thm:main} is almost identical to one of the main results in \cite{BCSTT20}, which was done independently from us. However, the proof we present here is significantly shorter, and is based on exploring connections with other problems in graph theory. Due to our approach, we found out that the construction given by Theorem \ref{thm:tao} can be also used to give a counterexample to a conjecture of Alon et al. \cite{ABCMR18} about the number of edges in a graph of bounded separation dimension.

The \emph{separation dimension} of a graph $G$ is the smallest $d$ for which there exists an embedding $\phi:G\rightarrow \mathbb{R}^{d}$ such that if $\{x,y\}$ and $\{x',y'\}$ are disjoint edges of $G$, then the axis-parallel box spanned by $\phi(x)$ and $\phi(y)$ is disjoint from the axis parallel box spanned by $\phi(x')$ and $\phi(y')$. Alon et al. \cite{ABCMR18} conjectured that for every $d$ there exists a constant $c>0$ such that if $G$ is a graph on $n$ vertices with separation dimension $d$, then $G$ has at most $cn$ edges. They proved this in the case $d=2$. Also, Scott and Wood \cite{SW18} confirmed the conjecture for $d=3$, which also implies the bound $O(n (\log n)^{d-3})$ for $d>3$. However, we show that the conjecture no longer holds for $d\geq 6$.

\begin{theorem}\label{thm:construction}
For every $n$, there exists a graph $G$ on $n$ vertices with $\Omega(n\frac{\log n}{\log \log n})$ edges such that the separation dimension of $G$ is at most 4.
\end{theorem}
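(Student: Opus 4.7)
The plan is to apply Theorem \ref{thm:tao} directly: take the bipartite $K_{2,2}$-free incidence graph $H$ of $\lfloor n/2 \rfloor$ points and $\lceil n/2 \rceil$ rectangles guaranteed by that theorem, so that $|V(H)| = n$ and $|E(H)| = \Omega(n \log n / \log \log n)$. It then suffices to exhibit an embedding $\phi : V(H) \to \mathbb{R}^4$ witnessing that the separation dimension of $H$ is at most $4$.

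Writing each point as $p = (x_p, y_p)$ and each rectangle as $R = [a_R, b_R] \times [c_R, d_R]$, the natural embedding I would try is
\[
\phi(p) = (x_p, x_p, y_p, y_p), \qquad \phi(R) = (a_R, b_R, c_R, d_R).
\]
The motivation is that an incidence $p \in R$ is equivalent to the four inequalities $a_R \leq x_p \leq b_R$ and $c_R \leq y_p \leq d_R$, which coincide exactly with the four coordinate intervals of the bounding box of the edge $\{p, R\}$, namely $[a_R, x_p] \times [x_p, b_R] \times [c_R, y_p] \times [y_p, d_R]$.

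To verify that this embedding witnesses separation dimension at most $4$, I would take two disjoint edges $\{p, R\}$ and $\{p', R'\}$ (so $p \neq p'$ and $R \neq R'$) and suppose for contradiction that their bounding boxes meet. The first two coordinates then force $\max(a_R, a_{R'}) \leq \min(x_p, x_{p'}) \leq \max(x_p, x_{p'}) \leq \min(b_R, b_{R'})$, so $\{x_p, x_{p'}\} \subseteq [a_R, b_R] \cap [a_{R'}, b_{R'}]$; the analogous argument in coordinates $3$ and $4$ yields $\{y_p, y_{p'}\} \subseteq [c_R, d_R] \cap [c_{R'}, d_{R'}]$. Hence both $p$ and $p'$ lie in both $R$ and $R'$, producing a $K_{2,2}$ in $H$ and contradicting Theorem \ref{thm:tao}.

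I expect no serious obstacle here: the entire argument is driven by choosing the right four coordinates, so that $K_{2,2}$-freeness can simply be read off from any alleged bounding-box intersection. The only minor subtlety is to interpret incidences via closed rectangles, so that boundary coincidences still certify a $K_{2,2}$; if needed, a tiny generic perturbation of the coordinates supplied by Theorem \ref{thm:tao} puts everything in general position without changing any incidence.
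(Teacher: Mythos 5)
Your proposal is correct and is essentially the paper's argument: your embedding $\phi(p)=(x_p,x_p,y_p,y_p)$, $\phi(R)=(a_R,b_R,c_R,d_R)$ coincides, up to negating two coordinates (which does not affect box intersections), with the embedding $(x,-x,y,-y)$, $(a,-b,c,-d)$ of Claim \ref{dim4}, and your direct verification that a box intersection forces $p,p'\in R\cap R'$ is exactly the $t=2$ case of Claim \ref{boxes} unwound. The only (harmless) difference is that by checking the separation property directly you bypass the poset-dimension formulation, and so you do not need the paper's preliminary reduction to configurations in which no rectangle contains another.
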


Note that dimension 4 cannot be lowered by the aforementioned result of Scott and Wood \cite{SW18}.

The connection between separation dimension and $K_{2, 2}$-free intersection graphs of boxes also implies the following almost matching bound to Theorem \ref{thm:tao}, which improves the corresponding result in \cite{BCSTT20}.

\begin{corollary}\label{cor:K22}
 If $G$ is the incidence graph of $n$ points and $n$ rectangles in the plane, and $G$ is $K_{2,2}$-free, then $G$ has at most $O(n\log n)$ edges.
\end{corollary}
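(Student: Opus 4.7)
The plan is to prove that every $K_{2,2}$-free incidence graph $G$ of points and rectangles in the plane has separation dimension at most $4$; the claimed $O(n\log n)$ bound on $|E(G)|$ then follows immediately from the Scott--Wood bound $O(n(\log n)^{d-3})$ on the number of edges in a graph of separation dimension $d$, specialised to $d=4$.

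To exhibit an embedding witnessing separation dimension at most $4$, I would define $\phi : V(G) \to \mathbb{R}^4$ by mapping each point $p=(x,y)$ to $\phi(p) = (x,-x,y,-y)$ and each rectangle $R = [a_1,a_2]\times[b_1,b_2]$ to $\phi(R) = (a_1,-a_2,b_1,-b_2)$. The alternating signs are chosen so that the incidence $p\in R$ is equivalent to the componentwise inequality $\phi(p)\geq\phi(R)$; hence, for every edge $\{p,R\}$ of $G$, the axis-parallel box spanned by $\phi(p)$ and $\phi(R)$ is exactly $[a_1,x]\times[-a_2,-x]\times[b_1,y]\times[-b_2,-y]$.

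The key step is to check what it means for two such boxes, associated to disjoint edges $\{p,R\}$ and $\{p',R'\}$, to intersect. Going coordinate by coordinate, the two boxes meet if and only if $a_1\leq x'\leq a_2$, $a_1'\leq x\leq a_2'$, $b_1\leq y'\leq b_2$, and $b_1'\leq y\leq b_2'$; that is, if and only if $p'\in R$ and $p\in R'$. Combined with the original incidences $p\in R$ and $p'\in R'$, this would put $\{p,p',R,R'\}$ into a copy of $K_{2,2}$, contradicting the hypothesis. Consequently, disjoint edges of $G$ have disjoint $4$-dimensional boxes, so the separation dimension of $G$ is indeed at most $4$.

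I expect the main creative step to be spotting the $(\pm x,\pm y,\pm a,\pm b)$-style embedding that simultaneously turns the incidence relation into a coordinate-wise dominance \emph{and} arranges for the $K_{2,2}$ obstruction to appear exactly when the four coordinate intervals overlap. After that choice is made, the verification reduces to routine inequality chasing and the edge estimate is obtained by quoting the Scott--Wood theorem; no separate argument exploiting the bipartite structure of $G$ is required.
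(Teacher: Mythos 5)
Your argument is correct, and it uses the same embedding as the paper: points $(x,y)\mapsto(x,-x,y,-y)$ and rectangles $[a_1,a_2]\times[b_1,b_2]\mapsto(a_1,-a_2,b_1,-b_2)$, followed by the Scott--Wood bound. The only real difference is organizational. The paper factors the argument through two intermediate statements: Claim \ref{dim4}, which asserts that this embedding realizes the underlying bipartite poset in $\mathbb{R}^4$ (i.e.\ $\mbox{dim}(P)\leq 4$), and Claim \ref{boxes}, which converts a $K_{2,2}$-free poset of dimension $d$ into a graph of separation dimension at most $d$. Because the poset-dimension route requires the embedding to be a faithful order realizer, the paper must first reduce to the case where no rectangle contains another (two nested rectangles would become comparable under $\phi$ even though they are incomparable in $P(G)$), and it spends a paragraph on that reduction. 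You instead verify the separation-dimension condition directly on pairs of disjoint edges; since every edge is a point--rectangle incidence, rectangle--rectangle comparabilities never enter, and the containment reduction becomes unnecessary. This is a modest but genuine streamlining; the coordinate-by-coordinate computation you perform is exactly the unrolled content of Claim \ref{boxes} in this special case, and the conclusion via \cite{SW18} is identical.
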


\section{Boxicity, poset dimension and separation dimension}

In order to prove Theorem \ref{thm:main} and Theorem \ref{thm:construction}, let us introduce some notation. The \emph{boxicity} of a graph $G$, denoted by $\mbox{box}(G)$ is the smallest $d$ such that $G$ can be realized as the intersection graph of $d$-dimensional boxes. 

Given a partially ordered set $P$, the dimension (Duschnik-Miller dimension) of $P$, denoted by $\mbox{dim}(P)$ is the smallest $d$ such that there exists an embedding $\phi:P\rightarrow \mathbb{R}^{d}$ satisfying that $x<_P y$ if and only if $\phi(x)_i<\phi(y)_i$ for $i=1,\dots,d$. (Here, $v_i$ is the $i$-th coordinate of $v$.)

The following connection between boxicity and poset dimension was established by Adiga, Bhowmick and Chandran \cite{ABC11}. Given a graph $G$, define the bipartite poset $(P(G),\prec)$ as follows: let the elements of $P(G)$ be $V(G)\times\{0,1\}$, and let $(u,0)\prec(v,1)$ if $u=v$ or $uv\in E(G)$.

\begin{theorem}\label{dim}\cite{ABC11}
$\frac{1}{2}\mbox{box}(G)\leq\mbox{dim}(P(G))\leq 2\mbox{box}(G)+4.$ Also, if $G$ is bipartite, and $P$ is the underlying partial order, then $\mbox{dim}(P)\leq 2\mbox{box}(G)$.
\end{theorem}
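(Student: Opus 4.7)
The plan is to prove the two inequalities in Theorem~\ref{dim} by explicit constructions that convert between realizers of $P(G)$ and box representations of $G$. For the lower bound $\mbox{box}(G) \leq 2\mbox{dim}(P(G))$, I would let $d = \mbox{dim}(P(G))$ and fix a realizer $L_1, \dots, L_d$ of $P(G)$. For each vertex $u$ and each $k \in [d]$, let $a_k^u$ and $b_k^u$ denote the positions of $(u, 0)$ and $(u, 1)$ in $L_k$; since $(u, 0) \prec (u, 1)$ in $P(G)$ forces $a_k^u < b_k^u$, the products $B_u = \prod_{k=1}^d [a_k^u, b_k^u]$ are genuine boxes. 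The key observation is that $B_u \cap B_v \neq \emptyset$ holds iff $(u, 0) <_{L_k} (v, 1)$ and $(v, 0) <_{L_k} (u, 1)$ for every $k$; by the realizer property, this is equivalent to $(u, 0) \prec (v, 1)$ and $(v, 0) \prec (u, 1)$ in $P(G)$, hence to $u = v$ or $uv \in E(G)$. So the $B_u$ realize $G$ in $d$ dimensions, giving in fact $\mbox{box}(G) \leq \mbox{dim}(P(G))$.

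For the upper bound $\mbox{dim}(P(G)) \leq 2\mbox{box}(G) + 4$, I would fix a box representation $B_u = \prod_{i=1}^d [a_i^u, b_i^u]$ with $d = \mbox{box}(G)$, perturbed to general position. For each coordinate $i$, define two linear orders on $V(G) \times \{0,1\}$: $L_i^+$ sorts by the key $(u, 0) \mapsto a_i^u$ and $(v, 1) \mapsto b_i^v$, while $L_i^-$ sorts by $(u, 0) \mapsto -b_i^u$ and $(v, 1) \mapsto -a_i^v$, with arbitrary consistent tie-breaking. Both are linear extensions of $P(G)$: whenever $(u, 0) \prec (v, 1)$, the boxes intersect (or $u=v$), which forces $a_i^u \leq b_i^v$ and $a_i^v \leq b_i^u$ in every coordinate. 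For any incomparable pair $(u, 0), (v, 1)$ with $uv \notin E(G)$, some coordinate $i$ satisfies $a_i^u > b_i^v$ (whence $L_i^+$ reverses the pair) or $a_i^v > b_i^u$ (whence $L_i^-$ does). To realize the incomparability of same-type pairs $\{(u, 0), (u', 0)\}$ and $\{(u, 1), (u', 1)\}$ in the intersection, I would add four auxiliary linear extensions derived from a fixed vertex ordering $v_1, \dots, v_n$ and its reverse, all placing type 0 before type 1; these guarantee that both orientations appear for every same-type pair. A routine check then shows that the intersection of all $2d + 4$ orders equals $P(G)$.

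For the bipartite refinement $\mbox{dim}(P) \leq 2\mbox{box}(G)$, the same $L_i^\pm$ construction should work without the auxiliary extensions. The main obstacle I anticipate here is that if some same-part box strictly contains another, say $B_a \subsetneq B_{a'}$ with $a, a'$ in the same part, then every $L_i^+$ and every $L_i^-$ orders the pair the same way, introducing a spurious comparability in the intersection. To overcome this, I would argue that any box representation of a bipartite graph can be slightly perturbed, without altering the intersection pattern, to eliminate same-part containments; the key point is that within a single part of a bipartite graph, the boxes do not need to mutually intersect in any prescribed way, so there is enough geometric freedom to shift or extend them into an antichain configuration under containment. This reduces the bipartite case to the general construction with only $2d$ orders.
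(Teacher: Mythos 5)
The paper does not prove this theorem at all: it is imported verbatim from Adiga, Bhowmick and Chandran \cite{ABC11}, so there is no in-paper proof to compare against. Your reconstruction is essentially the argument of \cite{ABC11} and it is correct. For the first inequality, reading off the two positions of $(u,0)$ and $(u,1)$ in each linear order of a realizer and forming the box $\prod_k[a_k^u,b_k^u]$ does give an intersection representation of $G$: the boxes $B_u$ and $B_v$ meet iff $a_k^u<b_k^v$ and $a_k^v<b_k^u$ for all $k$, which by the realizer property is exactly $(u,0)\prec(v,1)$ and $(v,0)\prec(u,1)$, i.e.\ $u=v$ or $uv\in E(G)$. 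Note that this actually yields the stronger bound $\mbox{box}(G)\leq \mbox{dim}(P(G))$, which of course implies the stated $\tfrac{1}{2}\mbox{box}(G)\leq\mbox{dim}(P(G))$. The upper bound via the $2d$ orders $L_i^{\pm}$ plus auxiliary extensions is also sound (two auxiliary extensions already suffice, since a single order placing all type-$0$ elements first handles one orientation of every remaining incomparable pair, but $2d+4$ is all that is claimed).

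One remark on the bipartite refinement: the obstacle you anticipate cannot occur, and the reason you give for circumventing it is false. In a box representation of a bipartite graph $G$, two vertices in the same part are non-adjacent, so their boxes are forced to be \emph{disjoint} --- it is not true that same-part boxes ``do not need to mutually intersect in any prescribed way.'' In particular no same-part box can contain another, so no perturbation argument is needed: disjointness of same-part boxes immediately yields a separating coordinate $i$, and then $L_i^{+}$ and $L_i^{-}$ order that pair oppositely. The same separating-coordinate argument handles non-adjacent cross pairs in both orientations, so the $2d$ orders already intersect to $P$ and the bound $\mbox{dim}(P)\leq 2\mbox{box}(G)$ follows. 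So your proof is correct once this vacuous detour is removed, but you should fix the erroneous claim about same-part boxes.
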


The poset $P(G)$ not only estimates the boxicity of $G$ well, it also (almost) retains the property of being $K_{t,t}$-free (when referring to a poset as a graph, we refer to its comparability graph).

\begin{claim}\label{partite}
If $G$ is $K_{t,t}$-free, then $P(G)$ has a $K_{t,t}$-free induced subgraph with at least $e(G)/2$ edges.
\end{claim}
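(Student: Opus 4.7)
The plan is to use a random bipartition of $V(G)$ to select an induced subposet that destroys all ``diagonal'' edges $(u,0)(u,1)$, which are the only obstruction preventing a $K_{t,t}$ in $G$ from coming from a $K_{t,t}$ in $P(G)$.

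More precisely, I would place each vertex $u\in V(G)$ independently into a set $V_0$ or $V_1$, each with probability $1/2$. Let $Q=(V_0\times\{0\})\cup(V_1\times\{1\})\subseteq V(P(G))$, and let $H$ be the induced subgraph of the comparability graph of $P(G)$ on $Q$. I claim this $H$ works. Since the comparability graph of $P(G)$ is bipartite with sides $V(G)\times\{0\}$ and $V(G)\times\{1\}$, any $K_{t,t}$ in $H$ has one part $\{(u_1,0),\dots,(u_t,0)\}\subseteq V_0\times\{0\}$ and the other part $\{(v_1,1),\dots,(v_t,1)\}\subseteq V_1\times\{1\}$. Because $V_0\cap V_1=\emptyset$ we have $u_i\neq v_j$ for all $i,j$, so the relation $(u_i,0)\prec(v_j,1)$ forces $u_iv_j\in E(G)$; this produces a $K_{t,t}$ in $G$ and contradicts the hypothesis. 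Hence $H$ is automatically $K_{t,t}$-free.

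For the edge count, observe that the edges of the comparability graph of $P(G)$ are of two types: the diagonal edges $(u,0)(u,1)$ for $u\in V(G)$, and, for each $uv\in E(G)$, the pair $(u,0)(v,1)$ and $(v,0)(u,1)$. No diagonal edge survives in $H$ because $V_0$ and $V_1$ are disjoint. For a fixed edge $uv\in E(G)$, exactly one of the two associated edges lies in $H$ precisely when $\{u,v\}$ is split between $V_0$ and $V_1$, an event of probability $1/2$. By linearity of expectation, $\mathbb{E}[|E(H)|]=e(G)/2$, and so some choice of $V_0$ yields an induced subgraph $H$ with at least $e(G)/2$ edges, as required.

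There is no real obstacle here; the only conceptual point is recognising that the diagonal edges are the sole source of ``phantom'' $K_{t,t}$'s in $P(G)$, and that a simple random bipartition removes them all while losing only a factor $2$ in the edge count.
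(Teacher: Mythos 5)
Your proof is correct and follows essentially the same route as the paper: the paper takes a (max-cut style) partition $(A,B)$ of $V(G)$ with at least half the edges crossing and restricts $P(G)$ to $(A\times\{0\})\cup(B\times\{1\})$, which is exactly the deterministic form of your random bipartition argument, with the same observation that removing the diagonal pairs makes any $K_{t,t}$ in the induced subgraph pull back to one in $G$.
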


\begin{proof}
Let $(A,B)$ be a partition of $V(G)$ such that at least half of the edges of $G$ have one endpoint in $A$ and $B$. Then the subgraph of $P(G)$ induced on $\{(a,0):a\in A\}\cup \{(b,1):b\in B\}$ is $K_{t,t}$-free. Indeed, a copy of $K_{t,t}$ in this subgraph would correspond to a copy of $K_{t,t}$ in $G$ in which one of the vertex classes is in $A$, and the other is in $B$.
\end{proof}

Given two points $x$ and $y$ in $\mathbb{R}^{d}$, let $b(x,y)$ denote the box spanned by $x$ and $y$. Let $\prec$ denote the partial ordering on $\mathbb{R}^{d}$ defined as $x\prec y$ if $x_i< y_i$ for $i=1,\dots,d$.

\begin{claim}\label{boxes}
Let $V$ be a set of points in $\mathbb{R}^{d}$ and let $P=(V,\prec)$. If $P$ does not contain $K_{t,t}$, then $P$ contains no matching $\{x^1,y^1\},\dots,\{x^t,y^t\}$ of size $t$ such that $\bigcap_{i=1}^{t} b(x^i,y^i)\neq \emptyset.$
\end{claim}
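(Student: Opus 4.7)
The plan is to establish the contrapositive: starting from a matching $\{x^1,y^1\},\dots,\{x^t,y^t\}$ of size $t$ in $P$ with a common point $z\in\bigcap_{i=1}^{t}b(x^i,y^i)$, I will exhibit a $K_{t,t}$ in the comparability graph of $P$. After swapping labels within each pair if necessary, I may assume $x^i\prec y^i$ for every $i$, since $\{x^i,y^i\}$ is an edge of $P$.

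The key observation is that $z\in b(x^i,y^i)=\prod_{k=1}^{d}[x^i_k,y^i_k]$ unfolds to $x^i_k\leq z_k\leq y^i_k$ for every $i$ and every coordinate $k$. Chaining through $z$, for every $i,j\in\{1,\dots,t\}$ and every coordinate $k$,
\[
x^i_k \;\leq\; z_k \;\leq\; y^j_k,
\]
so $x^i_k\leq y^j_k$ weakly. If these weak inequalities can all be upgraded to strict ones, then $x^i\prec y^j$ for every pair $(i,j)$, and hence the parts $\{x^1,\dots,x^t\}$ and $\{y^1,\dots,y^t\}$ span the required $K_{t,t}$ in $P$.

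The one delicate step, which I expect to be the main obstacle, is this promotion from $\leq$ to $<$: a priori $z$ could sit on a face of some box and produce a tie $x^i_k=y^j_k$ for some $i\neq j$ and some $k$. I plan to dispose of this by a generic position argument, perturbing the points of $V$ by an arbitrarily small amount so that all the strict $\prec$-relations $x^i\prec y^i$ are preserved, the intersection $\bigcap_i b(x^i,y^i)$ remains non-empty, and no two distinct points of $V$ share any coordinate (this is in fact automatic in the natural realizer-based embedding produced by a poset dimension witness, which is how the claim will be applied). Under such a general-position assumption, vertex-disjointness of the matching forces $x^i\neq y^j$, hence $x^i_k\neq y^j_k$ for every $k$, and together with the weak dominance $x^i_k\leq y^j_k$ this sharpens to $x^i_k<y^j_k$, yielding $x^i\prec y^j$ and completing the construction of $K_{t,t}$.
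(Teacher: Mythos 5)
Your argument is essentially the paper's own proof: pick $z$ in the common intersection and chain $x^i\preceq z\preceq y^j$ through all pairs to produce the two sides of a $K_{t,t}$. The delicate point you isolate is real, and you are in fact more careful than the source, which simply writes $x^i\prec z\prec y^i$ without comment; with closed boxes and no general-position hypothesis the statement can genuinely fail (for $t=2$ in the plane take $x^1=(0,0)$, $y^1=(2,2)$, $x^2=(2,0.5)$, $y^2=(3,1.5)$: the boxes meet along the segment $\{2\}\times[0.5,1.5]$, yet $x^2$ and $y^1$ are incomparable and the four points span no $K_{2,2}$). One caveat about your proposed fix: perturbing $V$ can \emph{create} comparabilities, so a $K_{t,t}$ exhibited in the perturbed poset need not pull back to the original one; the reliable resolution is the one you mention in passing, namely that in the realizer-based embedding where the claim is actually applied no two distinct points share any coordinate, so every inequality in the chain is strict from the start.
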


\begin{proof}
Let us assume that there exists such a matching $\{x^1,y^1\},\dots,\{x^t,y^t\}$, and without loss of generality, assume that $x^i\prec y^i$ for $i=1,\dots,t$. Let $z\in \bigcap_{i=1}^{t} b(x^i,y^i)$, then $x^i\prec z\prec y^i$. But then $x^{i}\prec z\prec y^{j}$ for all $1\leq i,j\leq t$, which means that $x^1,\dots,x^t$ and $y^1,\dots,y^t$ span $K_{t,t}$ in $P$.
\end{proof}

Note that this claim also tells us that if the poset $P$ is $K_{2,2}$-free, then its separation dimension is at most $d$, as $V$ is a suitable embedding of the vertices. Let us use this to prove Theorem \ref{thm:construction}. First, we show  a somewhat weaker result.

\begin{theorem}
For every $n$, there exists a graph $G$ on $n$ vertices with $\Omega(n\frac{\log n}{\log \log n})$ edges such that the separation dimension of $G$ is at most 6.
\end{theorem}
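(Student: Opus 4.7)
The plan is to chain together the three ingredients developed in this section: the bipartite $K_{2,2}$-free intersection graph of boxes in $\mathbb{R}^{3}$ from Corollary~\ref{cor:tao}, the boxicity--poset-dimension inequality from Theorem~\ref{dim}, and the matching-versus-box-intersection dichotomy from Claim~\ref{boxes}. As the remark immediately after Claim~\ref{boxes} already observes, the latter says that a $K_{2,2}$-free poset embedded in $\mathbb{R}^{d}$ automatically certifies separation dimension at most $d$ for its comparability graph, so the whole argument reduces to producing a dense $K_{2,2}$-free bipartite graph whose underlying poset has dimension at most $6$.

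Concretely, I would start with the bipartite $K_{2,2}$-free graph $G$ on $n$ vertices with $\Omega(n\log n/\log\log n)$ edges and $\mbox{box}(G)\le 3$ supplied by Corollary~\ref{cor:tao}. Since $G$ is bipartite, the second part of Theorem~\ref{dim} yields $\mbox{dim}(P)\le 2\mbox{box}(G)\le 6$, where $P$ denotes the bipartite partial order on $V(G)$ whose comparability graph is $G$ itself. Realizing this dimension gives an embedding $\phi:V(G)\to\mathbb{R}^{6}$ such that $u<_P v$ iff $\phi(u)\prec\phi(v)$ coordinate-wise.

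Because $G$ is $K_{2,2}$-free and $G$ is the comparability graph of $P$, the poset $P$ is also $K_{2,2}$-free. Applying Claim~\ref{boxes} with $t=2$ to the point set $\phi(V(G))\subset\mathbb{R}^{6}$, we conclude that for any two edges $\{u_1,v_1\}$ and $\{u_2,v_2\}$ of $P$, equivalently of $G$, that form a matching, the boxes $b(\phi(u_1),\phi(v_1))$ and $b(\phi(u_2),\phi(v_2))$ are disjoint. This is precisely the condition in the definition of separation dimension, so $\phi$ witnesses that $G$ has separation dimension at most $6$.

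The proof is essentially a chaining of already-established statements, so there is no single hard step; the one place where some care is needed is to invoke the bipartite refinement $\mbox{dim}(P)\le 2\mbox{box}(G)$ in Theorem~\ref{dim}, rather than the general bound $\mbox{dim}(P(G))\le 2\mbox{box}(G)+4$. The latter would only give separation dimension at most $10$, since it refers to the auxiliary bipartite poset $P(G)$ from the statement of Theorem~\ref{dim}, whereas the sharper bipartite-specific version uses the partial order on $V(G)$ itself, which has $G$ as its comparability graph and thus inherits $K_{2,2}$-freeness directly.
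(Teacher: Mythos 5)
Your proposal is correct and follows exactly the paper's argument: take the dense bipartite $K_{2,2}$-free intersection graph of boxes in $\mathbb{R}^3$ from Corollary~\ref{cor:tao}, apply the bipartite case of Theorem~\ref{dim} to get $\mbox{dim}(P)\le 2\,\mbox{box}(G)=6$, and invoke Claim~\ref{boxes} with $t=2$ to turn the $6$-dimensional poset realizer into a witness for separation dimension at most $6$. Your explicit remark that the bipartite refinement (rather than the general $2\,\mbox{box}(G)+4$ bound on the auxiliary poset $P(G)$) is what gives $6$ instead of $10$ is a correct and worthwhile clarification of a step the paper leaves implicit.
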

\begin{proof}[Proof of Theorem \ref{thm:construction}]
Let $G$ be the bipartite intersection graph of $n$ boxes in $\mathbb{R}^3$ such that $G$ contains no copy of $K_{2,2}$, and $|E(G)|=\Omega(n\frac{\log n}{\log \log n})$. Such a graph exists by Corollary \ref{thm:tao}. But then $\mbox{dim}(P)\leq 2\mbox{box}(G)=6$ by Theorem \ref{dim}, where $P$ is the underlying comparability graph of $G$. We are done as $P$ has separation dimension at most 6 as well.
\end{proof}

In order to improve the dimension from 6 to 4, we just note that if $G$ is the incidence graph given by  Theorem \ref{thm:tao} instead of the intersection graph of Corollary \ref{cor:tao}, then $P$ has dimension at most 4. The proof of this follows from a similar argument as the one in \cite{ABC11}, but for the reader's convenience, we present a short proof here as well.

\begin{claim}\label{dim4}
Let $G$ be the incidence graph of points and rectangles in the plane such that no rectangle contains another, and $G$ is $K_{2,2}$-free. Let $P$ be the underlying bipartite poset, then $\mbox{dim}(P)\leq 4$. 
\end{claim}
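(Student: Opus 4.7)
The plan is to construct an explicit four-dimensional realizer of the bipartite poset $P$. The natural choice is to use two coordinates for each of the two planar axes: to each point $p=(p_1,p_2)\in\mathbb{R}^2$ I assign $\phi(p)=(p_1,-p_1,p_2,-p_2)$, and to each rectangle $R=[a_1,b_1]\times[a_2,b_2]$ I assign $\phi(R)=(b_1,-a_1,b_2,-a_2)$. The motivation is that the containment $p\in R$ is equivalent to the conjunction of the four inequalities $a_1<p_1<b_1$ and $a_2<p_2<b_2$, which is exactly the condition $\phi(p)\prec\phi(R)$ in $\mathbb{R}^4$. After a generic perturbation of the coordinates we may assume that no point lies on the boundary of any rectangle, so strict and non-strict containment coincide.

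The core of the verification is to check that all four kinds of pairs in $P$ behave correctly under $\prec$. The point-rectangle case is immediate from the definition, and the reverse direction $\phi(R)\prec\phi(p)$ is impossible, since it would simultaneously require $b_1<p_1$ and $a_1>p_1$, contradicting $a_1\le b_1$. Two distinct points are never comparable in the image, as $\phi(p)\prec\phi(q)$ would force both $p_1<q_1$ and $-p_1<-q_1$. The only place a geometric hypothesis actually enters is the rectangle-rectangle case: $\phi(R)\prec\phi(R')$ would force $[a_1,b_1]\subsetneq[a'_1,b'_1]$ and $[a_2,b_2]\subsetneq[a'_2,b'_2]$, i.e.\ $R\subsetneq R'$, which is ruled out precisely by the hypothesis that no rectangle contains another.

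No step presents a serious obstacle, and the whole argument reduces to a short bookkeeping check. It is worth remarking that the $K_{2,2}$-free assumption does not play a role in this particular estimate; the only geometric input required is that no rectangle contains another. The $K_{2,2}$-freeness enters only when the claim is subsequently applied to the construction from Theorem~\ref{thm:tao} in order to derive Theorem~\ref{thm:construction}.
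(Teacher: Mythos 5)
Your proposal is correct and is essentially the paper's own argument: the same embedding $(x,y)\mapsto(x,-x,y,-y)$ with rectangles mapped to the four-tuple of their defining inequalities, differing only in the (immaterial) choice of orientation of the poset. Your closing remark is also accurate -- the paper's proof likewise only uses $K_{2,2}$-freeness in a preliminary reduction to the ``no rectangle contains another'' case, which is needed when the claim is later applied in Corollary \ref{cor:K22}.
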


\begin{proof}
 We show that if $G$ is $K_{2,2}$-free, then we can assume that no rectangle contains the other. Indeed,  suppose that $R\subset Q$ for some rectangles $R$ and $Q$. Then $R$ contains at most one point as $G$ is $K_{2,2}$-free. But then we can replace $R$ with a rectangle $R'$ such that $Q$ is very thin and long, $Q$ contains only the point in $R$, and $Q$ has no containment relation with any other rectangle. But then this configuration has the same incidence graph $G$.

Let $P \subset \mathbb R^2$ be a set of points and $R$ be a set of rectangles in $\mathbb R^2$ such that no rectangle contains the other. Denote by $G$ the corresponding incidence graph. Consider the map $\phi: \mathbb R^2 \rightarrow \mathbb R^4$ defined by $(x, y) \mapsto (x, -x, y, -y)$. Given a rectangle $S = \{ (x, y)~|~a \le x\le b,~ c \le y \le d\}$ on the plane, denote by $\phi(S) \in \mathbb R^4$ the point with coordinates $(a, -b, c, -d)$. Note that a point $p$ is contained in a rectangle $S$ if and only if $\phi(S) \prec \phi(p)$. Clearly, any two points are incomparable, and as no rectangle is contained in another, no two rectangles are comparable.
\end{proof}

This finishes the proof of Theorem \ref{thm:construction}. Let us continue with the proof of Theorem \ref{thm:main}.

\begin{theorem}\label{numedges}
Let $d,t$ be positive integers, then there exists $c=c(d)$ such that the following holds. Let $V$ be a set of $n$ points in $\mathbb{R}^{d}$ and let $G$ be a graph on $V$ such that $G$ contains no matching $\{x^1,y^1\},\dots,\{x^t,y^t\}$ of size $t$ satisfying $\bigcap_{i=1}^t b(x^i,y^i)\neq \emptyset$. Then $e(G)\leq ctn(\log n)^{d-1}$. 
\end{theorem}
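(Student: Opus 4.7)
The plan is to prove Theorem~\ref{numedges} by induction on the dimension $d$, using median-split divide-and-conquer in one coordinate. Write $f_d(n)$ for the maximum number of edges allowed by the hypothesis; the induction will establish $f_d(n) = O(t n (\log n)^{d-1})$, with the implicit constant depending only on $d$.

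For the inductive step $d \ge 2$, let $m$ be the median of the first coordinates of $V$ and split $V = V_L \sqcup V_R$ with $|V_L|, |V_R| \le \lceil n/2 \rceil$. Edges inside $V_L$ or inside $V_R$ are controlled by induction on $n$ and contribute at most $2 f_d(\lceil n/2 \rceil)$. For a crossing edge $\{u, v\}$ with $u \in V_L, v \in V_R$, the box $b(u, v)$ meets the hyperplane $H = \{x_1 = m\}$ in a $(d-1)$-dimensional box --- exactly the box spanned by the projections $u', v'$ of $u, v$ onto $H \cong \mathbb{R}^{d-1}$. If $t$ pairwise-disjoint crossing edges had a common point $q$ in their projected boxes, then $q$ viewed as a point of $H \subset \mathbb{R}^d$ would lie in all $t$ original boxes, contradicting the hypothesis. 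Hence, after a small generic perturbation making the projections distinct without creating any new $t$-fold box intersection, the projected bipartite graph on at most $n$ points in $\mathbb{R}^{d-1}$ satisfies the hypothesis of the theorem, and by the inductive hypothesis on $d$ has at most $f_{d-1}(n) = O(tn (\log n)^{d-2})$ edges. The resulting recurrence
\[
f_d(n) \le 2 f_d(\lceil n/2 \rceil) + C' \cdot t n (\log n)^{d-2}
\]
unrolls to $f_d(n) = O(tn (\log n)^{d-1})$, as claimed.

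For the base case $d = 1$, one needs the sharp linear bound $f_1(n) = O(tn)$. At every point $p \in \mathbb{R}$, the edges whose intervals contain $p$ form a bipartite graph between $\{v \in V : v \le p\}$ and $\{v \in V : v > p\}$ with matching number strictly less than $t$; otherwise the $t$ matching edges would be a forbidden $t$-matching sharing the common point $p$. By K\"onig's theorem there is a vertex cover of size at most $t - 1$ at every $p$, and I plan to derive the linear bound from this by a direct combinatorial argument --- for instance by showing that $G$ is $O(t)$-degenerate, where a vertex of very high degree can be shown to force, at a suitable sweep position just past it, a K\"onig cover that, combined with the remaining crossing edges, inevitably builds the forbidden $t$-matching; iterated deletion of a vertex of small degree then yields $e(G) \le C \cdot t n$.

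The main obstacle is precisely this tight base case. The naive median-split argument in dimension one would give only $f_1(n) \le 2 f_1(\lceil n/2 \rceil) + (t-1)(n-1)$ and hence $f_1(n) = O(tn \log n)$; fed into the recurrence above, this would yield $f_d(n) = O(tn (\log n)^d)$, off by a factor of $\log n$ from the stated bound. A direct combinatorial argument in dimension one that avoids this logarithmic loss is therefore essential.
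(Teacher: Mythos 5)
Your inductive step is exactly the paper's argument: the same median hyperplane split, the same observation that the crossing edges project to an instance of the theorem in $\mathbb{R}^{d-1}$ (the paper phrases this as: for $x,x'\in A$ and $y,y'\in B$, $b(x,y)\cap b(x',y')\neq\emptyset$ if and only if the projected boxes meet), and the same recurrence $f_d(n)\le 2f_d(\lceil n/2\rceil)+f_{d-1}(n)$, which unrolls correctly. You also correctly diagnose that everything hinges on a genuinely linear bound $f_1(n)=O(tn)$ in dimension one, and that a further median split there would cost an extra logarithm.

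That base case, however, is exactly where you stop. ``I plan to derive the linear bound\dots can be shown to force\dots'' is a proof plan, not a proof, and the K\"onig-cover observation alone does not close it: the set $E_p$ of edges whose intervals contain a fixed point $p$ has matching number at most $t-1$ and hence a vertex cover of size at most $2t-2$, but such a set can still be arbitrarily large (all edges of a star centred at a vertex near $p$ lie in $E_p$), so bounding the covers at every sweep position does not by itself bound $e(G)$; likewise, no vertex of ``very high degree'' can be shown to force the forbidden configuration, since a star has a vertex of degree $n-1$ and contains no $2$-matching at all. What you actually need is that the \emph{minimum} degree of every induced subgraph is $O(t)$, i.e.\ $O(t)$-degeneracy, which is what the paper asserts for $d=1$ (in one admittedly terse sentence, via degeneracy of interval graphs with bounded clique number). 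A workable route: take the leftmost vertex $v_1$ and its rightmost neighbour $u$; every edge disjoint from $\{v_1,u\}$ with an endpoint strictly between them has its interval meeting $b(v_1,u)$, so by Helly in $\mathbb{R}^1$ these edges form an instance of the hypothesis with parameter $t-1$, and for $t=2$ this immediately shows every vertex strictly between $v_1$ and $u$ has degree at most $2$, giving $2$-degeneracy; the general-$t$ version requires a careful induction or charging argument. Until you supply such an argument, the proof is incomplete at precisely the point you yourself flag as the main obstacle.
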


\begin{proof}
The statement follows from a standard divide and conquer argument.

 Let us proceed by induction on $d$. First, consider the base case $d=1$. In this case, $b(x,y)$ is an interval. It is easy to show that if the intersection graph of intervals contains no $K_{2t}$, then it is $(2t-2)$-degenerate. But then $e(G)< 2tn$.

 Now suppose that $d\geq 2$. Let $f_d(n)$ denote the minimum $m$ such that any graph $G$ with the desired properties has at most $m$ edges. We show that $f_{d}(n)\leq c_{d}tn(\log n)^{d-1}$, where $c_{d}>0$ depends only on $d$.
 
 Let $G$ be a graph with the desired properties. Without loss of generality, we can assume that no two points in $V$ are on the same axis-parallel hyperplane. Let $H$ be a $(d-1)$-dimensional hyperplane perpendicular to the last coordinate axis such that at most half of the points of $V$ are on each side of $H$. Let $A$ and $B$ be the set of points of $V$  on the two sides of $H$. Let $p(x)$ denote the projection of $x$ into $H$, and let $G'$ be the graph on vertex set $p(V)$ in which $p(x)$ and $p(y)$ are joined by an edge if $xy\in E(G)$ and $x\in A$ and $y\in B$. If $x,x'\in A$ and $y,y'\in B$, then $b(x,y)\cap b(x',y')\neq \emptyset$ if and only if $b(p(x),p(y))\cap b(p(x'),p(y'))\neq \emptyset$. Therefore, $G'$ contains no matching of size $t$ such that the boxes spanned by the edges have a nonempty intersection. Hence, we deduce that
 $$e(G)=e(G[A])+e(G[B])+e(G[A,B])\leq 2f_d(n/2)+e(G')\leq 2f_d(n/2)+f_{d-1}(n).$$
 From this, we get that $f_d(n)=O(f_{d-1}(n)\log n)=O(tn(\log n)^{d-1})$, where the last equality holds by our induction hypothesis, and the constant hidden in the $O(.)$ notation only depends on $d$.
\end{proof}

In case $t=2$, Theorem \ref{numedges} can be improved. In this case, the graph $G$ has separation dimension at most $d$, which implies that $e(G)\leq cn(\log n)^{d-3}$ by the result of Scott and Wood.

After these preparations, everything is set to prove our main theorem.

\begin{proof}[Proof of Theorem \ref{thm:main}]
Let $G$ be the intersection graph of $n$ boxes in $\mathbb{R}^{d}$, and suppose that $G$ is $K_{t,t}$-free. Then by Theorem \ref{dim} and Claim \ref{partite}, there exists a $K_{t,t}$-free poset $P$ with at least $e(G)/2$ edges, whose dimension is at most $2\mbox{box}(G)+4\leq 2d+4$. But then by Claim \ref{boxes} and Theorem \ref{numedges}, we get $e(P)<ctn(\log n)^{2d+3}$, where $c=c(d)>0$ only depends on $d$. This gives $e(G)<2ctn(\log n)^{2d+3}$.
\end{proof}

Finally, let us prove Corollary \ref{cor:K22}.

\begin{proof}[Proof of Corollary \ref{cor:K22}]
Let $G$ be the incidence graph of $n$ points and $n$ rectangles in the plane, and suppose that $G$ is $K_{2,2}$-free. Then by Claim \ref{dim4}, the underlying poset $P$ of $G$ has dimension at most $4$. But as $P$ is $K_{2,2}$-free, $P$ has separation dimension at most 4, so by the result of Scott and Wood \cite{SW18}, $P$ has at most $O(n\log n)$ edges.
\end{proof}

\section*{Acknowledgements}
We would like to thank Andrey Kupavskii, J\'anos Pach and Alexandr Polyanskii for valuable discussions.

The first author was supported by the SNSF grant 200021-175573. Also, both authors were supported by the Ministry of Educational and Science of the Russian Federation in the framework of MegaGrant no 075-15-2019-1926.


\begin{thebibliography}{99}
\bibitem{ABC11}
A. Adiga, D. Bhowmick, and L. S. Chandran.
Boxicity and poset dimension.
SIAM J. Discrete Math., 25(4):1687--1698, 2011.

\bibitem{ABCMR18}
N. Alon, M. Basavaraju, L. S. Chandran, R. Mathew, and D. Rajendraprasad.
Separation dimension and sparsity.
J. Graph Theory, 89(1):14--25, 2018.

\bibitem{BCSTT20}
A. Basit, A. Chernikov, S. Starchenko, T. Tao, and C.-M. Tran.
Zarankiewicz's problem for semilinear hypergraphs.
arXiv preprint, arXiv:2009.02922.

\bibitem{FP08}
J. Fox, and J. Pach.
Separator  theorems  and  Tur\'an-type  results  for planar intersection graph.
Advances in Mathematics 219(3):1070--1080, 2008.

\bibitem{FPSSZ17}
J. Fox, J. Pach, A. Sheffer, A. Suk, and J. Zahl.
A semi-algebraic version of Zarankiewicz’s problem.
Journal of the European Mathematical Society 19(6):1785--1810, 2017.

\bibitem{JP20}
O. Janzer, and C. Pohoata.
On the Zarankiewicz problem for graphs with bounded VC-dimension.
arXiv preprint, arXiv:2009.00130.

\bibitem{KST54}
T. K\H{o}v\'ari, V. S\'os, and P. Tur\'an.
On a problem of K. Zarankiewicz.
Colloquium mathematicum:50--57, 1954.

\bibitem{SW18}
A. Scott, and D. R. Wood.
Separation dimension and degree.
Math. Proc. Camb. Phil. Soc.: 1--10, 2018.

\bibitem{SzT83}
E. Szemer\'edi, and W. T. Trotter.
Extremal problems in discrete geometry.
Combinatorica 3(3-4):381--392, 1983.
\end{thebibliography}
\end{document}